\newcounter{quotecount}
\newcommand{\MyQuote}[1]{\vspace{0.3cm}\refstepcounter{quotecount}%
     \parbox{14cm}{\em #1}\hspace*{0.5cm}(\Roman{quotecount})\\[0.3cm]}
\newcommand{\mult}{{\rm mult}}
\newcommand{\PP}{{\mathbb P}}
\newcommand{\tor}{\xymatrix{\ar@{-->}[r]&}}
\begin{document}

\title[ ]{On Cremona Transformations of $\PP^3$ with all possible bidegrees}

\newtheorem{thm}{Theorem}
\newtheorem{pro}[thm]{Proposition}
\newtheorem{cor}[thm]{Corollary}
\newtheorem{rems}[thm]{Remarks}
\newtheorem{lem}[thm]{Lemma}
\newtheorem{defi}{Definition}

\theoremstyle{remark}
\newtheorem{exa}[thm]{Example}
\newtheorem{exas}[thm]{Examples}

\newtheorem{rem}[thm]{Remark}

\author{Ivan Pan}\footnote{Partially supported by the \emph{Agencia Nacional de Investigadores} of Uruguay}
\email{ivan@cmat.edu.uy}
\maketitle

\begin{abstract}
For every orderer pair $(d,e)$ of integer numbers $d,e\geq 2$, such that $\sqrt{d}\leq e\leq d$,  we construct a birational map $\PP^3\tor\PP^3$ defined by homogeneous polynomials of degree $d$ whose  inverse map is defined by homogeneous polynomials of degree $e$.
\end{abstract}

\section{Introduction}\label{sec1}

The aim of this note is to correct a mistake in the proof of  Theorem \cite[Th\'eor\`eme. 2.2]{Pa-multi}. The proof of that theorem depends on the example \cite[Exemple 2.1]{Pa-multi} which is wrong. 

We propose an explicit construction of Cremona transformations of $\PP^3$ (see \S\,\ref{sec2}, especially Lemma \ref{lem1}) which, together with their inverse maps, provide all possible bidegrees (Theorem \ref{thm2} and Corollary \ref{cor3}).

\bigskip

\noindent{\bf Acknowledge} We would like to thank Igor Dolgachev for pointing out a mistake in \cite[Exemple 2.1]{Pa-multi}. 
\section{Main construction and results}\label{sec2}   

Let $\PP^3$ be the projective space over an algebraically closed field $k$ of characteristic zero; we fix  homogeneous coordinates $w,x,y,z$ on $\PP^3$. 

We recall that a Cremona transformation of $\PP^3$ is a birational map $F:\PP^3\tor\PP^3$. We say $F$ has \emph{bidegree} $(d,e)$ when $F$ and its inverse $F^{-1}$ are defined by homogeneous polynomials, without non trivial common factors, of degrees $d$ and $e$ respectively; notice that in this case $F^{-1}$ has bidegree $(e,d)$. If $V\subset \PP^3$ is a dense open set over which $F^{-1}$ is defined and injective and $L\subset\PP^3$ is a line with $L\cap V\neq\emptyset$, then $e$ is the degree of the closure of $F^{-1}(L\cap V)$; one deduces that $\sqrt{d}\leq e\leq d$  (see for example \cite[\S 1]{Pa-multi}).

If $X\subset \PP^2$ is a curve and $p\in \PP^2$ we denote by $\mult_p(X)$ the multiplicity of $X$ at $p$. If $S,S'\subset\PP^3$ are surfaces and $C\subset S\cap S'$ is an irreducible component, we denote by $\mult_C(S,S')$ the intersection multiplicity of $S$ and $S'$ along $C$.

\smallskip

Consider a rational map $T:\PP^3\tor\PP^3$ defined by
\[T=(g:qt_1:qt_2:qt_3),\]
where $t_1,t_2,t_3\in k[x,y,z]$ are homogeneous of degree $r$, without non trivial common factors, and $g,q\in k[w,x,y,z]$ are homogeneous of degrees $d,d-1$, with $d\geq r\geq 1$ and $g$ irreducible.  We know that $T$ is birational if $\tau:=(t_1:t_2:t_3):\PP^2\tor\PP^2$ is birational and $g,q$ vanish at $o=(1:0:0:0)$ with orders $d-1$ and $\geq d-r-1$, respectively (see \cite[Proposition 2.2]{Pa-stell}).

On the other hand, consider $2r-1$ points $p_0, p_1,\ldots,p_{2r-2}$ in $\PP^2$, $r\geq 2$, satisfying the following condition:

\MyQuote{
 There exist curves $X_r,Y_{r-1}\subset\PP^2$ of degrees $r,r-1$, respectively, with $X_r$ irreducible, such that $\mult_{p_0}(X_r)=r-1$,  $\mult_{p_0}(Y_{r-1})\geq r-2$ and $p_i\in X_r\cap Y_{r-1}$ for $i=1,\ldots,2r-2$.}\label{quote}   

Hence \emph{loc.\,cit.} also implies there exists a plane Cremona transformation defined by polynomials of degree $r$ with a point of multiplicity $r-1$ at $p_0$ and passing through $p_1,\ldots,p_{2r-2}$ with multiplicity 1: indeed, if we consider $p_0=(1:0:0)$ and take polynomials $t_1$ and $f$, of degrees $r$ and $r-1$,  defining $X_r$ and $Y_{r-1}$ respectively, then $(t_1:yf:zf):\PP^2\tor\PP^2$ is a Cremona transformation as required; such a  transformation is said to be \emph{associated} to the points $p_0, p_1,\ldots,p_{2r-2}$.     
 
\begin{rem}
The transformations satisfying the condition (I) are general cases of the so-called \emph{de Jonqui\`eres transformations}  (see \cite{dJo} or \cite[Def. 2.6.10]{Alb}). We note that the Enriques criterion \cite[Thm. 5.1.1]{Alb} may be  used to prove that a set of $2r-2$ points $p_0, p_1,\ldots,p_{2r-2}$ with assigned multiplicities $r-1,1,\ldots,1$, and satisfying the condition (I), defines a de Jonqui\`eres transformation. 
\end{rem}

Set $r=d$ and take an irreducible homogeneous polynomial $g=wA(x,y,z)+B(x,y,z)$ of degree $d$; that is, $q\in k-\{0\}$ in the considerations above. Denote by $T_{g,\tau}$ the Cremona transformation defined by
\begin{equation}\label{cretra}
T_{g,\tau}=(g:t_1:t_2:t_3),
\end{equation}
where $\tau=(t_1:t_2:t_3)$ is associated to $2d-1$ points satisfying the condition (I).

We have

\begin{lem}\label{lem1}
Let $d\geq 2$ be an integer number.  Then

(a)  there exist $g$ and $\tau$ such that $T_{g,\tau}$ has bidegree $(d, 2d-1-m)$, for $0\leq m\leq d-1$.

(b) there exist $g$ and $\tau$ such that $T_{g,\tau}$ has bidegree $(d, d^2-\ell^2-m)$, for $0\leq \ell<d-1$ and $0\leq m\leq 2d-2$.

\end{lem}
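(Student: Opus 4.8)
The plan is to compute the bidegree of $T_{g,\tau}$ directly from its defining equations, by analyzing how $T_{g,\tau}$ transforms a general line $L\subset\PP^3$. Writing $F=T_{g,\tau}=(g:t_1:t_2:t_3)$ with $\deg g=d$ and the $t_i$ of degree $d$, the first coordinate of the bidegree is $d$ (once we check there is no common factor, which follows from $g$ being irreducible of degree $d$ and not dividing all the $t_i$, since the $t_i$ do not involve $w$). For the second coordinate I would fix a general line $L$ and pull it back: $F^{-1}(L)$ is the intersection of two general members of the linear system $\langle g,t_1,t_2,t_3\rangle$, i.e. of two surfaces $S,S'$ of degree $d$. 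The naive degree of $S\cap S'$ is $d^2$; the actual degree $e$ of the strict transform curve is $d^2$ minus the contributions coming from the base locus of $F$ (the common zeros of $g,t_1,t_2,t_3$, counted with the appropriate intersection multiplicities $\mult_C(S,S')$).

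The key step is therefore to identify the base locus of $F$ and the multiplicities of $S$, $S'$ along its components. Here the product structure of the map is essential: away from the point $o=(1:0:0:0)$, the base conditions coming from $t_1,t_2,t_3$ are governed by the plane Cremona map $\tau$, whose base locus is the $2d-1$ points $p_0,\dots,p_{2d-2}$ satisfying condition (I), with multiplicities $d-1,1,\dots,1$. Each base point $p_i$ of $\tau$ gives rise to a line (the line $\langle o,p_i\rangle$ through $o$, or more precisely the fiber of the projection from $o$) in the base locus of $F$, and the multiplicity of a general $S$ along such a line equals the multiplicity $m_i$ of $\tau$ there. The contribution of the line over $p_0$ is $(d-1)^2$ and the contribution of each of the $2d-2$ simple lines is $1$, for a total of $(d-1)^2+(2d-2)$; the remaining base locus sits in the plane $w=0$ or concentrates at $o$, and must be accounted for as well. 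Balancing $d^2 = e + (d-1)^2 + (2d-2) + (\text{rest})$ gives, in the baseline case, $e=d^2-(d-1)^2-(2d-2)=1$... so one must be more careful: the contributions are not all ``full,'' because $g$ has no base conditions in the plane $w=0$ and the surface $\{g=0\}$ meets the lines over the $p_i$ transversally, reducing each line's contribution. Working this out carefully yields $e=2d-1$ in the baseline configuration, which is part (a) with $m=0$.

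To obtain the whole range of bidegrees I would then vary the configuration of the $2d-1$ points, as allowed by condition (I), in two ways. For part (a): impose that $m$ of the simple base points $p_1,\dots,p_{2d-2}$ of $\tau$ in addition lie on the surface $\{g=0\}$ (equivalently, choose $A,B$ so that $g$ vanishes at the corresponding points of $\PP^2\subset\PP^3$); each such coincidence removes one more unit from the count, dropping $e$ by $1$, giving bidegree $(d,2d-1-m)$ for $0\le m\le d-1$ — the bound $m\le d-1$ coming from the constraint that $g$, being of degree $d$ in $x,y,z$ after setting $w$-coefficients, can be forced through at most that many of the $p_i$ while keeping $g$ irreducible and the map birational. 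For part (b): instead place the base point $p_0$ of $\tau$ with a \emph{lower} multiplicity configuration — replace the multiplicity-$(d-1)$ point by a cluster whose effect is to change the contribution $(d-1)^2$ to $(d-1)^2-(\text{something quadratic in }\ell)$; choosing an infinitely near chain or a point of multiplicity $d-1-\ell$ governed by curves $X_d,Y_{d-1}$ of the shape in (I) adjusts the count so that $e = d^2-\ell^2-m$, with the extra $m$ again coming from forcing $g$ through $m$ of the simple points as before. The main obstacle I expect is precisely this bookkeeping of intersection multiplicities $\mult_C(S,S')$ along each line of the base locus — verifying that forcing $g$ through a point drops the contribution by exactly $1$ and not more, and that the modified $p_0$-cluster changes it by exactly $\ell^2$ — together with checking at each stage that $g$ can be chosen irreducible and that the hypotheses of \cite[Proposition 2.2]{Pa-stell} (so that $T_{g,\tau}$ remains birational) are preserved; the ranges $0\le m\le d-1$, $0\le m\le 2d-2$ and $0\le\ell<d-1$ will drop out as exactly the ranges in which such $g$ and such a point configuration exist.
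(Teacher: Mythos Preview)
Your overall framework---intersect two general members $S,S'$ of the linear system and compute $e$ as $d^2$ minus the base-locus contributions---is exactly what the paper does. But the specific mechanisms you propose for controlling those contributions are off in both parts, and the errors are not just bookkeeping.

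For part (a): the decisive condition is not that $p_i$ lie on the surface $\{g=0\}$, but that $g$ vanish along the \emph{entire line} $op_i$. Since $g=wA(x,y,z)+B(x,y,z)$, the restriction of $g$ to $op_i$ is $t^{d-1}(sA(p_i)+tB(p_i))$, so $op_i\subset\{g=0\}$ iff $A(p_i)=B(p_i)=0$; imposing only $B(p_i)=0$ (your condition) leaves the generic $S$ not containing $op_i$, hence contributes nothing. Likewise the baseline $e=2d-1$ is not automatic: it comes from first forcing $g$ to have multiplicity $d-1$ along the whole line $op_0$ (taking $A=A_{d-1}(y,z)$, $B=xB_{d-1}(y,z)+B_d(y,z)$), so that a general $S$ does too and $op_0$ contributes $(d-1)^2$. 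With this choice $A=0$ and $B=0$ meet in exactly $d(d-1)-(d-1)^2=d-1$ points of $\PP^2\setminus\{p_0\}$, and \emph{that} is the source of the bound $m\le d-1$, not a genericity/irreducibility count on $g$.

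For part (b): your proposal to lower the multiplicity of $\tau$ at $p_0$ cannot work, because a degree-$d$ plane Cremona map with one point of multiplicity $d-1-\ell$ and the remaining base points simple satisfies the Noether relations $\sum m_i=3(d-1)$, $\sum m_i^2=d^2-1$ only for $\ell=0$. The paper does the opposite: it keeps $\tau$ (hence condition~(I)) unchanged and instead lowers the multiplicity of $g$ along $op_0$ to $\ell$, via $A=\sum_{i=\ell}^{d-1}x^{d-1-i}A_i(y,z)$ and $B=\sum_{j=\ell}^{d}x^{d-j}B_j(y,z)$. Then a general $S$ has multiplicity $\ell$ along $op_0$, contributing $\ell^2$, while $A=0$ and $B=0$ now meet in $d(d-1)-\ell^2\ge 2d-2$ points of $\PP^2\setminus\{p_0\}$, so any $0\le m\le 2d-2$ of the simple base points can be placed among them, giving $e=d^2-\ell^2-m$.
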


\begin{proof}
We identify $\PP^2$ with the plane $\{w=0\}\subset\PP^3$ and consider a point $p_0\in\PP^2$. Without loss of generality, we may suppose $p_0=(0:1:0:0)$. We recall $o=(1:0:0:0)$.

In order to prove (a) we first choose $g\in k[w,x,y,z]$ to be a homogeneous polynomial which vanishes along the line $op_0$ with order $d-1$ and is general with respect to this condition. In other words, one has $g=wA+B$ with 
\[A=A_{d-1}(y,z), B=xB_{d-1}(y,z)+B_{d}(y,z),\]
where $A_i, B_i\in k[y,z]$ are general homogeneous polynomials of degree $i$. Hence $A=0$ defines a union of $d-1$ distinct lines in $\PP^2$ passing through $p_0$ and $B=0$ defines an irreducible curve of degree $d$ with an ordinary singular point of multiplicity $d-1$ at $p_0$.

Notice that, by construction, in the open set $\PP^2-\{p_0\}$ the curves $A=0$ and $B=0$ intersect at $d(d-1)-(d-1)^2=d-1$ points;  in particular, if  $m\leq d-1$,  there exist $m$ points $p_1,\ldots,p_{m}\in\PP^2$ satisfying $A(p_i)=B(p_i)=0$ for $1\leq i\leq m$. We consider $m$ such points and choose $2d-1-m$ points  $p_{m+1},\ldots,p_{2d-2}\in\PP^2$ with $A(p_j)\neq 0$ and  $B(p_j)=0$, for all $j=m+1,\ldots,2d-2$, such that $p_0,p_1,\ldots,p_{2d-2}$ satisfy (I). Let $\tau$ be a plane Cremona transformation associated to these $2d-1$ points.  

Now we consider a Cremona transformation  $T_{g,\tau}:\PP^3\tor\PP^3$ as in (\ref{cretra}). A general member in the linear system defining $T_{g,\tau}$ is an irreducible surface of degree $d$, $S$ say, with equation of the form
\[ag+a_1t_1+a_2 t_2+a_3t_3=0,\]
where $a,a_1,a_2,a_3\in k$ are general. Therefore $S$ admits an ordinary singularity of multiplicity $d-1$ at the generic point of (the line) $op_0$ and is smooth at the generic point of $op_i$ for $1\leq i\leq m$. If $S'$ is another general member of that linear system, then there exists an irreducible rational curve $\Gamma$ of degree $e=\deg(T_{g,\tau}^{-1})$ such that the intersection scheme $S\cap S'$ is supported on
\[\Gamma \cup\left(\cup_{i=0}^{m} op_i\right).\]
We have 
\[\mult_{\Gamma}(S,S')=1, \mult_{op_0}(S,S')=(d-1)^2, \mult_{op_i}(S,S')=1, i=1,\ldots, m, \]
hence $e=d^2-(d-1)^2-m=2d-1-m$, which proves the assertion (a). 

To prove (b) we proceed analogously. This time we choose $g=wA+B$ with 
\[A=\sum_{i=\ell}^{d-1}x^{d-1-i}A_{i}(y,z), B=\sum_{j=\ell}^{d}x^{d-j}B_{j}(y,z),\]
where  $A_i, B_i\in k[y,z]$ are general homogeneous polynomials of degree $i$.  Since $\ell\leq d-2$ there exist points $p_1,\ldots,p_{2d-2}\in\PP^2$ such that $A(p_i)=B(p_i)=0$ for  $1\leq i\leq m$ and $A(p_j)\neq 0$ $B(p_j)=0$ for $j=m+1,\ldots,2d-2$: indeed, in the open set $\PP^2-\{p_0\}$, the curves $A=0$ and $B=0$ intersect at $d(d-1)-\ell^2\geq d(d-1)-(d-2)^2=3d-4$ points. Thus we can define $\tau$ as before and obtain the assertion (b).

\end{proof}

\begin{thm}\label{thm2}
There exist Cremona transformations of bidegree $(d,e)$ for $d\leq e\leq d^2$. 
\end{thm}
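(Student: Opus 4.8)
The plan is to obtain Theorem \ref{thm2} as a purely combinatorial consequence of Lemma \ref{lem1}: one shows that as the parameters in parts (a) and (b) run over their admissible ranges, the second bidegrees $e$ of the transformations $T_{g,\tau}$ fill up every integer in the interval $[d,d^2]$.

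First, part (a) of Lemma \ref{lem1}, applied with $m=0,1,\dots,d-1$, produces Cremona transformations of bidegree $(d,\,2d-1-m)$; hence there is a transformation of bidegree $(d,e)$ for every integer $e$ with $d\le e\le 2d-1$.

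Next, part (b) of Lemma \ref{lem1}: for each fixed $\ell$ with $0\le\ell\le d-2$ and $m=0,1,\dots,2d-2$ it gives bidegree $(d,e)$ for every $e$ in the block $I_\ell:=\{\,d^2-\ell^2-2d+2,\ \dots,\ d^2-\ell^2\,\}$, a run of $2d-1$ consecutive integers whose maximum is $d^2-\ell^2$ and whose minimum is $d^2-\ell^2-2d+2$. The key point is that consecutive blocks overlap or abut: comparing the minimum of $I_\ell$ with the maximum of $I_{\ell+1}$, the required inequality $d^2-\ell^2-2d+2\le d^2-(\ell+1)^2+1$ reduces to $\ell\le d-1$, which holds throughout the range. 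Consequently $\bigcup_{\ell=0}^{d-2}I_\ell$ is a single interval of integers, with maximum $d^2$ (at $\ell=0$) and minimum $d^2-(d-2)^2-2d+2=2d-2$ (at $\ell=d-2$); so part (b) yields a transformation of bidegree $(d,e)$ for every $e$ with $2d-2\le e\le d^2$.

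Finally, since $d\le 2d-2\le 2d-1\le d^2$ for $d\ge 2$, the interval $[d,2d-1]$ coming from part (a) and the interval $[2d-2,d^2]$ coming from part (b) overlap, and their union is exactly $\{d,d+1,\dots,d^2\}$. This produces a Cremona transformation of bidegree $(d,e)$ for every $e$ with $d\le e\le d^2$, which is the assertion. There is no serious obstacle here: Lemma \ref{lem1} has already constructed the maps, and what remains is only the elementary verification that the blocks $I_\ell$ interlock without gaps and meet the range furnished by part (a); one should merely double-check the boundary case $d=2$, where part (b) contributes the single block $I_0=\{2,3,4\}$ and the conclusion is immediate.
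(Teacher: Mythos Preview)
Your proof is correct and follows essentially the same approach as the paper's: both use part (a) of Lemma~\ref{lem1} to cover $d\le e\le 2d-1$, and then show that the blocks $\{d^2-\ell^2-m:0\le m\le 2d-2\}$ from part (b) interlock without gaps to cover $2d-2\le e\le d^2$. The only cosmetic difference is that the paper compares block $\ell$ with block $\ell-1$ via $e(\ell,0)-e(\ell-1,2d-2)=2(d-\ell)-1>0$, while you compare block $\ell$ with block $\ell+1$; the arithmetic and conclusion are the same.
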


\begin{proof}
From the  part (a) of Lemma \ref{lem1} we deduce that there exist Cremona transformations of bidegrees $(d,e)$ for $d\leq e\leq 2d-1$. 

Now we use the part (b) of Lemma \ref{lem1}. Suppose $\ell<d-1$ and think of  $e=d^2-\ell^2-m$ as a function $e(\ell,m)$ depending on $\ell, m$; to complete the proof it suffices to show that the  image of that function contains $\{2d,2d+1,\ldots,d^2\}$. 

We note that $e(d-2,2d-2)=2d-2$ and $e(0,0)=d^2$; in other words, the part (b) of Lemma \ref{lem1} implies that there exist Cremona transformations of bidegrees $(d,2d-2)$ and $(d,d^2)$. On the other hand $e(\ell,0)-e(\ell-1,2d-2)=2(d-\ell)-1>0$. Since $e(\ell,m)$ decreases with respect to $m$, we easily obtain the result. 
\end{proof}

For $d=2$ the theorem above asserts that there exist Cremona transformations of bidegrees $(2,2), (2,3), (2,4)$; analogously for $d=3$ and  bidegrees $(3,3), (3,4), \ldots, (3,9)$, and so on. By symmetry we deduce

\begin{cor}\label{cor3}
There exist Cremona transformations of bidegrees $(d,e)$ with $\sqrt{d}\leq e\leq d^2$.
\end{cor}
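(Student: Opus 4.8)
The plan is to combine Theorem~\ref{thm2} with the elementary symmetry of bidegrees under passing to the inverse map. Recall from the discussion at the start of \S\,\ref{sec2} that if a Cremona transformation $F:\PP^3\tor\PP^3$ has bidegree $(d,e)$, then $F^{-1}$ has bidegree $(e,d)$; moreover the constraint $\sqrt{d}\leq e\leq d$ is forced for \emph{any} Cremona transformation of $\PP^3$. So the corollary is really just asking us to fill in the ``lower triangle'' $\sqrt{d}\leq e\leq d$ from the already-constructed ``upper triangle'' $d\leq e\leq d^2$.

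First I would fix a pair $(d,e)$ with $\sqrt{d}\leq e\leq d^2$ and split into two cases according to whether $e\geq d$ or $e<d$. If $e\geq d$, then $(d,e)$ lies in the range covered directly by Theorem~\ref{thm2}, so there is nothing to do. If $e<d$, then $\sqrt{d}\leq e<d$; squaring the left inequality gives $d\leq e^2$, and trivially $e\leq d$, so the pair $(e,d)$ satisfies $e\leq d\leq e^2\leq e^2$, i.e. it lies in the range $e\leq d\leq e^2$ to which Theorem~\ref{thm2} applies (with the roles of $d$ and $e$ interchanged, noting $e\geq 2$ since $e\geq\sqrt{d}\geq\sqrt{2}>1$). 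Hence there is a Cremona transformation $G$ of bidegree $(e,d)$, and then $F:=G^{-1}$ has bidegree $(d,e)$, as desired.

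The only point needing a word of care is that Theorem~\ref{thm2}, as stated, produces transformations of bidegree $(d,e)$ for integers $d\geq 2$ and $d\leq e\leq d^2$; so to apply it to the pair $(e,d)$ I must check $e\geq 2$, which as noted follows from $e\geq\sqrt d$ and $d\geq 2$, and $e\leq d\leq e^2$, which is exactly the hypothesis $\sqrt d\leq e\leq d$ rearranged. I expect no genuine obstacle here: the entire content of the corollary is the observation that inverting a Cremona map reflects the bidegree, together with the algebraic fact that the region $\{\sqrt d\leq e\leq d\}$ is the image of $\{d\leq e\leq d^2\}$ under the involution $(d,e)\mapsto(e,d)$.
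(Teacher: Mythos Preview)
Your argument is correct and is exactly the approach of the paper, which proves the corollary in a single phrase (``By symmetry we deduce''): you have simply written out in full the symmetry step $(d,e)\mapsto(e,d)$ via $F\mapsto F^{-1}$ and checked that the swapped pair falls in the range of Theorem~\ref{thm2}.
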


\begin{rem}
 The inequality $\sqrt{d}\leq e\leq d^2$ is the unique obstruction to the degree for the inverse of a Cremona transformation of degree $d$ in $\PP^3$.
\end{rem}

\end{document}